\documentclass[12pt]{amsart}
\usepackage{amsmath, amsthm, amscd, amsfonts, amssymb, graphicx, color}
\usepackage{enumerate}
\usepackage[bookmarksnumbered, colorlinks, plainpages]{hyperref}
\usepackage{tikz}
\usepackage{comment}

\newtheorem{theorem}{Theorem}[section]
\newtheorem{lemma}[theorem]{Lemma}
\newtheorem{proposition}[theorem]{Proposition}

\newtheorem{corollary}[theorem]{Corollary}
\newtheorem{definition}[theorem]{Definition}

\newtheorem{remark}[theorem]{Remark}

\excludecomment{mysection}

\begin{document}
	
\title[Projections and orthogonality]{Projections in an order unit space and orthogonality}
\author{Anil Kumar Karn}
	
\address{School of Mathematical Sciences, National Institute of Science Education and Research Bhubaneswar, An OCC of Homi Bhabha National Institute, P.O. - Jatni, District - Khurda, Odisha - 752050, India.}

\email{\textcolor[rgb]{0.00,0.00,0.84}{anilkarn@niser.ac.in}}

\subjclass[2020]{Primary: 46B40; Secondary: 46B20.}
	
\keywords{ Order unit space, order unit property, extensive $\infty$-orthogonality}
	
\begin{abstract}
	We introduce the notion of order projections using the order unit property of a positive element in an order unit space and characterize them in terms of (geometric) orthogonality. We describe order projections of the order unit space obtained by adjoining an order unit to a normed linear space.
\end{abstract}

\maketitle 

\section{Introduction} 

Let $A$ be a unital C$^*$-algebra. Then $p \in A^+$ is said to be a \emph{projection}, if $p^2 = p$. Projections are one of the basic ingredients of operator algebra and play an important role in the classification theory. %

There have been many attempts to study projections in general set-ups, for example, in order unit spaces. Most of them have taken an algebraic discourse; namely embedding the space in its C$^*$-algebra envelop and those elements, which emerge as projections in the enveloping C$^*$-algebra, are termed as projections. 

In \cite{K4}, projections were described from a different point of view. 
\begin{definition}
	Let $(V, e)$ be an order unit space. Then $u \in V^+$ is said to have the \emph{order unit property} (OUP, in short), if for all $v \in V$, we have $\Vert v \Vert u \pm v \in V^+$ whenever $\lambda u \pm v \in V^+$. 
\end{definition}

It was shown in \cite[(Corollary 3.2 and Theorem 3.3.)]{K4} that in a (unital) C$^*$-algebra, the projections are the only positive elements having the (matrix) order unit property. Replicating the proof of \cite[Theorem 3.3]{K4}, we can extend it to a unital $JB$-algebra. Precisely stating, a positive element in a unital $JB$-algebra is a projection if and only if it has the order unit property.

In other words, projections in a C$^*$-algebra are more than just being algebraic in nature. More precisely, projections have several characterizing properties. Below we list some of them. Let $A$ be a unital C$^*$-algebra and let $p \in A^+$ with $\Vert p \Vert \le 1$. Then the following facts are equivalent: 
\begin{enumerate}
	\item $p$ is a projection.
	\item $p (1 - p) = 0$. (That is, $p$ is algebraic orthogonal to $1 - p$.) 
	\item $[0, p] \cap [0, 1 - p] = \lbrace 0 \rbrace$.
	\item $p$ is an extreme point of the convex set $[0, 1]$. 
	\item $p$ and $1 - p$ have the order unit property (definition given later) in $A$.
\end{enumerate}
 In \cite{K18}, it was proved that algebraic orthogonality in a C$^*$-algebra can be characterized via order theoretic orthogonality. 
\begin{definition}\cite{K14} 
	Let $X$ be a non-zero real normed linear space and let $x, y \in X$. We say that $x$ is 	$\infty$-orthogonal to $y$, (we write, $x \perp_{\infty} y$) if $\Vert x + k y \Vert = \max \lbrace \Vert x \Vert, \Vert k y \Vert \rbrace$ for all $k \in \mathbb{R}$.
\end{definition} 
	Let $A$ be a (unital) C$^*$-algebra and assume that $a, b \in A^+$. It follows from \cite{K16, K18} that $a b = 0$ if and only if $[0, a] \perp_{\infty} [0, b]$. Thus $p \in A^+$ with $\Vert p \Vert \le 1$, we conclude that $p$ is a projection if and only if $[0, p] \perp_{\infty} [0, 1 - p]$. 
	
	In this paper, we prove a stronger property of algebraic orthogonality: $a b = 0$ if and only if $[- a, a] \perp_{\infty} [- b, b]$. Interestingly, this form of orthogonality leads us to a characterization of the order unit property of a positive element in a unital C$^*$-algebra.
\begin{theorem}\label{oup}
	Let $(V, e)$ be an order unit space and assume that $u \in V^+ \setminus \lbrace 0, e \rbrace$. Then $u$ has the OUP in $V$ if and only if $\Vert u \Vert = 1 = \Vert e - u \Vert$ and $(e - u) \perp_{\infty} [- u, u]$ (that is, $(e - u) \perp_{\infty} v$ whenever $v \in V$ with $v \in [- u, u]$). 
\end{theorem}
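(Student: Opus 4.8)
The plan is to pass to the dual description of the order unit space and recast the order unit property as a statement about states. Recall that for an (Archimedean) order unit space the norm is recovered as $\|w\| = \sup_{\psi}|\psi(w)|$, the supremum running over the states $\psi$ of $(V,e)$ (positive functionals with $\psi(e)=1$), and that $w\in V^+$ if and only if $\psi(w)\ge 0$ for every state $\psi$. Under this dictionary I would first record the reformulation that drives both implications: $u$ has the OUP if and only if $|\psi(v)|\le\|v\|\,\psi(u)$ for every state $\psi$ and every $v$ dominated by a multiple of $u$. Indeed, $\|v\|u\pm v\in V^+$ means precisely $\|v\|\psi(u)\pm\psi(v)\ge 0$ for all $\psi$, which is the displayed inequality.

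For the forward implication I would first extract the two norm equalities. Applying the OUP to $v=u$ gives $(\|u\|-1)u\in V^+$, whence $\|u\|\ge 1$; together with $0\le u\le e$ this yields $\|u\|=1$. For $\|e-u\|=1$ I argue that $u$ cannot be an order unit: were $e\le\lambda u$ for some $\lambda$, the OUP applied to $v=e$ would give $u\pm e\in V^+$, so $u\ge e$ and hence $u=e$, contrary to hypothesis. Consequently $\inf_\psi\psi(u)=0$ and $\|e-u\|=\sup_\psi(1-\psi(u))=1$. To obtain $(e-u)\perp_{\infty}v$ for $v\in[-u,u]$ I would then estimate $\|(e-u)+kv\|$ through states. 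Writing $a=\psi(e-u)\in[0,1]$ and $b=\psi(v)$, the reformulation gives $|b|\le\|v\|\,\psi(u)=\|v\|(1-a)$, so with $t=|k|\,\|v\|$ one has $|a+kb|\le a+t(1-a)\le\max\{1,t\}$ by the elementary bound $a(1-t)+t\le\max\{1,t\}$ valid for $a\in[0,1]$ and $t\ge 0$; taking the supremum over $\psi$ gives $\|(e-u)+kv\|\le\max\{1,|k|\,\|v\|\}$. The reverse inequality splits into the bound $\ge 1$, furnished by states with $\psi(u)$ arbitrarily small (which force $|\psi(v)|$ small as well), and the bound $\ge|k|\,\|v\|$, furnished by states nearly norming $v$; the point is that $v\le\|v\|u$ forces any state with $|\psi(v)|$ close to $\|v\|$ to have $\psi(u)$ close to $1$, hence $\psi(e-u)$ close to $0$.

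For the converse I would exploit the orthogonality at the single most informative value of $k$. Fix $v\in[-u,u]$ with $\|v\|>0$; since $\max\{1,|k|\,\|v\|\}=1$ at $k=\pm\|v\|^{-1}$, the hypothesis $(e-u)\perp_{\infty}v$ gives $\|(e-u)\pm\|v\|^{-1}v\|=1$, so every state satisfies $|\psi(e-u)\pm\|v\|^{-1}\psi(v)|\le 1$. Reading off both signs yields $|\psi(v)|\le\|v\|\,(1-\psi(e-u))=\|v\|\,\psi(u)$ for every state $\psi$, and therefore $\|v\|\psi(u)\pm\psi(v)\ge 0$ for all $\psi$. Since states detect positivity, $\|v\|u\pm v\in V^+$; a scaling argument extends this from $[-u,u]$ to every $v$ dominated by a multiple of $u$, which is exactly the OUP.

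The conceptual heart of the argument is the dual reformulation together with the observation that $v\le\|v\|u$ couples the norming functionals of $v$ to those of $u$; granting this, the converse is immediate from evaluating $\perp_{\infty}$ at $k=\pm\|v\|^{-1}$. I expect the main technical obstacle to be the lower norm estimates in the forward direction: one must produce states that simultaneously (almost) norm one element and (almost) annihilate another, which I would handle by $\epsilon$-approximation so as not to presuppose weak$^*$-compactness of the state space (i.e.\ completeness of $V$). A secondary point requiring care is the normalization $\|u\|=1$, where the inequality $\|u\|\le 1$ relies on $u$ lying in the order interval $[0,e]$; I would make this hypothesis explicit, as the forward implication genuinely needs it.
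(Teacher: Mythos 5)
Your argument is correct (under the standing Archimedean convention) but takes a genuinely different route: you transport everything to the state space via the Kadison duality $\Vert w \Vert = \sup_{\psi}\vert\psi(w)\vert$ and ``states detect positivity,'' whereas the paper works internally in $V$. The paper's proof rests on two reductions: Lemma~\ref{oupl}, which recasts the OUP as the norm identities $\Vert \Vert v \Vert (e-u) \pm v \Vert = \Vert v \Vert$ for $v \in [-u,u]$, and Proposition~\ref{oinfty}, which says $\perp_{\infty}$ between unit vectors is equivalent to the two equalities $\Vert x \pm y \Vert = 1$. Your converse --- evaluating $\perp_{\infty}$ only at $k = \pm\Vert v \Vert^{-1}$ and reading off $\vert\psi(v)\vert \le \Vert v \Vert\,\psi(u)$ --- is precisely the dual shadow of that reduction, while your forward direction bypasses Lemma~\ref{oupl} and verifies all $k$ at once via the bound $a(1-t)+t \le \max\{1,t\}$ together with approximately norming states; the latter does require the $\epsilon$-approximation you mention, since the state space need not be compact without completeness. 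For $\Vert e-u \Vert = 1$ the paper invokes the decomposition $u = (1-\alpha)e + \alpha\bar{u}$ from \cite{K23}, whereas your observation that $u$ cannot be an order unit (else $u=e$), hence $\inf_{\psi}\psi(u)=0$, is self-contained and more elementary. Finally, your closing caveat about $\Vert u \Vert \le 1$ is a genuine catch rather than a weakness of your write-up: in $V=\mathbb{R}$ the element $u=2e$ has the OUP vacuously yet $\Vert u \Vert = 2$, and the paper's own derivation of $\Vert u \Vert \le 1$ in Step~1 rests on Lemma~\ref{oupl}, whose proof already uses $e-u \ge 0$ in the line $e + \bigl((e-u) \pm \Vert v \Vert^{-1}v\bigr) \ge e \pm \Vert v \Vert^{-1}v$; so the hypothesis $u \le e$ that you propose to make explicit is in fact tacitly needed by both arguments.
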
 
Let us recall from \cite{K23} that the set 
$$S_V := \lbrace u \in V^+: \Vert u \Vert = 1 = \Vert e - u \Vert \rbrace$$ 
together with $e$ determines $V$ as an order unit space. Note that $u \in S_V$, if it has the OUP in $V$ with $u \notin \lbrace 0, e \rbrace$. 

(Also, it is not difficult to prove that the elements having the OUP in $V$ are extreme points of the convex set $V_1^+ := \lbrace u \in V^+: \Vert u \Vert \le 1 \rbrace$. In a unital C$^*$-algebra, even the converse is also true. However, we can not comment about it in a general order unit space.) 

We can further expand Theorem \ref{oup}. We shall say that $u \in V^+$ is an \emph{order projection}, if both $u$ and $e - u$ have the OUP.
\begin{corollary}\label{poud}
	Let $(V, e)$ be an order unit space and assume that $u \in S_V$. Then the following statements are equivalent: 
	\begin{enumerate}
		\item $u$ is an order projection in $V$; 
		\item $[- u, u] \perp_{\infty} [- (e - u), (e - u)]$; 
		\item $u \perp_{\infty} [- (e - u), (e - u)]$ and $(e - u) \perp_{\infty} [- u, u]$. 
	\end{enumerate}
\end{corollary}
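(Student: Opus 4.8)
The plan is to run the cycle $(1)\Rightarrow(2)\Rightarrow(3)\Rightarrow(1)$, noting at the outset that $(1)\Leftrightarrow(3)$ is essentially free. Writing $w=e-u$, Theorem \ref{oup} says that $u$ has the OUP if and only if $w\perp_\infty[-u,u]$, since the norm conditions $\Vert u\Vert=1=\Vert w\Vert$ hold automatically because $u\in S_V$; applying the same theorem to $w$ in place of $u$ gives that $w$ has the OUP if and only if $u\perp_\infty[-w,w]$. Hence $u$ is an order projection precisely when both relations of $(3)$ hold. The implication $(2)\Rightarrow(3)$ is likewise immediate: specializing $[-u,u]\perp_\infty[-w,w]$ to the endpoint $u\in[-u,u]$ yields $u\perp_\infty[-w,w]$, and to the endpoint $w\in[-w,w]$ yields $a\perp_\infty w$ for every $a\in[-u,u]$, which gives $w\perp_\infty[-u,u]$ once one records the routine fact that $\perp_\infty$ is a symmetric relation. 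Thus the whole content of the corollary is the implication $(1)\Rightarrow(2)$.

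For $(1)\Rightarrow(2)$ the first step is a domination lemma read off directly from the definition of the OUP. If $a\in[-u,u]$ then $u\pm a\in V^+$, i.e.\ $\lambda u\pm a\in V^+$ with $\lambda=1$, so the OUP of $u$ forces $\Vert a\Vert u\pm a\in V^+$; that is, $-\Vert a\Vert u\le a\le\Vert a\Vert u$. Symmetrically, the OUP of $w=e-u$ gives $-\Vert b\Vert w\le b\le\Vert b\Vert w$ for every $b\in[-w,w]$. These two inclusions, together with the single identity $u+w=e$, are the only structural inputs I will use.

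Next I would establish the upper bound. Fixing $a\in[-u,u]$, $b\in[-w,w]$ and $k\in\R$, and setting $\alpha=\Vert a\Vert$, $\beta=\Vert b\Vert$, the domination lemma gives $\pm(a+kb)\le\alpha u+|k|\beta w\le\max\{\alpha,|k|\beta\}(u+w)=\max\{\alpha,|k|\beta\}\,e$, using $u,w\in V^+$. Hence $\Vert a+kb\Vert\le\max\{\Vert a\Vert,|k|\Vert b\Vert\}$, which is the easy half of the $\infty$-orthogonality identity and already follows from $(1)$ alone.

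The substantive step, and the one I expect to be the main obstacle, is the reverse inequality $\Vert a+kb\Vert\ge\max\{\Vert a\Vert,|k|\Vert b\Vert\}$. Here I would pass to the state space $S(V)=\{f\in V^*:f\ge 0,\ f(e)=1\}$, which is weak$^*$-compact and satisfies $\Vert v\Vert=\max_{f\in S(V)}|f(v)|$, so the norm of $a$ is attained at some $f\in S(V)$. The key observation is that such a norming state must be concentrated on $u$: since $a\le\alpha u\le\alpha e$ and $|f(a)|=\alpha$, one deduces $f(u)=1$, whence $f(w)=1-f(u)=0$ and therefore $|f(b)|\le\beta f(w)=0$. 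Consequently $|f(a+kb)|=|f(a)|=\Vert a\Vert$, giving $\Vert a+kb\Vert\ge\Vert a\Vert$; the symmetric argument with a norming state for $b$, which then annihilates $a$, yields $\Vert a+kb\Vert\ge|k|\Vert b\Vert$. Combined with the upper bound this proves $(2)$. The only delicate points are the sign bookkeeping when $f(a)=-\alpha$ rather than $\alpha$ (both cases still force $f(u)=1$) and the genuine attainment of the norm by a state, both settled by weak$^*$-compactness of $S(V)$.
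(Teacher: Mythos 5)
Your proposal is correct, and the reduction to the single implication $(1)\Rightarrow(2)$ (after observing that $(1)\Leftrightarrow(3)$ falls out of Theorem \ref{oup} and that $(2)\Rightarrow(3)$ is specialization plus symmetry of $\perp_{\infty}$) matches the paper's organization. Where you genuinely diverge is in proving $(1)\Rightarrow(2)$. The paper uses the OUP only to normalize: it takes $v\in[-u,u]$, $w\in[-(e-u),e-u]$ with $\Vert v\Vert=\Vert w\Vert=1$, gets $e\pm(v\pm w)\in V^+$ hence $\Vert v\pm w\Vert\le 1$, then forces equality from $2=\Vert(v+w)+(v-w)\Vert\le\Vert v+w\Vert+\Vert v-w\Vert\le 2$, and finishes by citing Proposition \ref{oinfty} (the criterion $\Vert x\pm y\Vert=1$ for unit vectors). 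You instead verify the defining identity $\Vert a+kb\Vert=\max\lbrace\Vert a\Vert,\Vert kb\Vert\rbrace$ for all $k$ directly: the upper bound from the order domination $\pm(a+kb)\le\max\lbrace\Vert a\Vert,\vert k\vert\Vert b\Vert\rbrace\,(u+w)=\max\lbrace\cdot\rbrace\,e$, and the lower bound by choosing a norming state, which is forced to take the value $1$ on $u$ (resp.\ on $e-u$) and hence to annihilate $b$ (resp.\ $a$). Your route bypasses Proposition \ref{oinfty} entirely but imports the Kadison representation $\Vert v\Vert=\max_{f\in S(V)}\vert f(v)\vert$ and weak$^*$-compactness of the state space; the paper's route is purely norm-theoretic and leans on the orthogonality duality already built in Section 2. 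Both are sound; do handle explicitly the degenerate cases $a=0$ or $b=0$ in your lower-bound step, where the norming-state argument as written divides by $\Vert a\Vert$ or $\Vert b\Vert$ (the inequality is of course trivial there).
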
 
The condition obtained in Corollary \ref{poud} leads to a notion of a stronger type of orthogonality. However, we prove that the two conditions coincide in any C$^*$-algebra and more generally, in any absolute order unit space. 

\section{Orthogonality in normed linear spaces} 

In this section, we find a criterion of $\infty$-orthogonality. For this, we recall and use the following notion.
\begin{definition}\cite{K14} 
	Let $X$ be a non-zero real normed linear space and let $x, y \in X$. We say that $x$ is $1$-orthogonal to $y$, (we write, $x \perp_1 y$) if $\Vert x + k y \Vert = \Vert x \Vert + \Vert k y \Vert$ for all $k \in \mathbb{R}$. 
\end{definition} 
The following result is apparently a folklore. We include a proof for the sake of completeness. 
\begin{proposition}\label{orth1}
	Let $X$ be a non-zero real normed linear space and let $x, y \in X$. Then $x \perp_1 y$ if and only if $\Vert x \pm y \Vert = \Vert x \Vert + \Vert y \Vert$. 
\end{proposition}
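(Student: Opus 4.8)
The forward implication is immediate: if $x \perp_1 y$, then specialising the defining identity $\Vert x + ky \Vert = \Vert x \Vert + \Vert ky \Vert$ to $k = 1$ and $k = -1$ gives exactly $\Vert x + y \Vert = \Vert x \Vert + \Vert y \Vert$ and $\Vert x - y \Vert = \Vert x \Vert + \Vert y \Vert$, since $\Vert ky \Vert = |k|\,\Vert y \Vert$. So the whole content lies in the converse: assuming only $\Vert x \pm y \Vert = \Vert x \Vert + \Vert y \Vert$, I must recover $\Vert x + ky \Vert = \Vert x \Vert + |k|\,\Vert y \Vert$ for every real $k$. Throughout I abbreviate $a = \Vert x \Vert$ and $b = \Vert y \Vert$.

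The upper estimate $\Vert x + ky \Vert \le a + |k|\,b$ is free from the triangle inequality and holds for all $k$, so everything reduces to the matching lower estimate $\Vert x + ky \Vert \ge a + |k|\,b$. Since the hypothesis is unchanged under replacing $y$ by $-y$, and that replacement turns $k$ into $-k$ in the conclusion, it suffices to treat $k \ge 0$. Here I would split according to whether $k$ lies in $[0,1]$ or in $[1,\infty)$ and feed the hypothesis into two explicit decompositions. For $0 \le k \le 1$, the identity $x + y = (x + ky) + (1-k)y$ together with the triangle inequality gives $a + b = \Vert x + y \Vert \le \Vert x + ky \Vert + (1-k)b$, whence $\Vert x + ky \Vert \ge a + kb$. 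For $k \ge 1$, I would instead write $x + ky = k(x+y) - (k-1)x$ and apply the reverse triangle inequality to obtain $\Vert x + ky \Vert \ge k\,\Vert x + y \Vert - (k-1)\,\Vert x \Vert = k(a+b) - (k-1)a = a + kb$. Combining each with the upper estimate yields equality, and the case $k < 0$ follows by the stated symmetry, completing the argument.

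The step I expect to be the real obstacle is the lower estimate, and the subtlety there is structural rather than computational. It is tempting to argue purely from convexity of the function $f(k) = \Vert x + ky \Vert$, but the three data points $f(0) = a$ and $f(\pm 1) = a + b$ together with convexity only force $f(k) \le a + |k|\,b$ on $[-1,1]$; that is, they merely reproduce the upper bound and leave the lower bound undetermined, because convexity propagates chord information outward rather than inward. The remedy is to set convexity aside in favour of the two ad hoc algebraic identities above, selecting in each range the decomposition whose norm terms collapse exactly to $a + kb$ once the hypothesis $\Vert x \pm y \Vert = a + b$ is invoked. Producing these identities, and matching the correct one to the correct range alongside the sign reduction, is the only genuine decision the proof requires.
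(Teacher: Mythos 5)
Your proof is correct and takes essentially the same route as the paper: the forward direction is immediate, and the converse is obtained by decomposing $x+ky$ in terms of $x+y$ (or $x-y$) and applying the (reverse) triangle inequality, with the range of $k$ split at $1$ and negative $k$ handled by symmetry. The only cosmetic difference is in the range $k\ge 1$, where the paper rescales to $k\Vert k^{-1}x\pm y\Vert$ and reuses the small-coefficient case, while you use the direct decomposition $x+ky=k(x+y)-(k-1)x$; both work equally well.
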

\begin{proof}
	If $x \perp_1 y$, then $\Vert x \pm y \Vert = \Vert x \Vert + \Vert y \Vert$. Conversely, we assume that $\Vert x \pm y \Vert = \Vert x \Vert + \Vert y \Vert$. Then 
	\begin{eqnarray*}
		\Vert x + k y \Vert &=& \Vert (x + y) - (1 - k) y \Vert \\ 
		&\ge& \Vert x + y \Vert - \Vert (1 - k) y \Vert \\ 
		&=& \Vert x \Vert + \Vert y \Vert - (1 - k) \Vert y \vert \\ 
		&=& \Vert x \Vert + \Vert k y \Vert
	\end{eqnarray*} 
	and 
	\begin{eqnarray*}
		\Vert x - k y \Vert &=& \Vert (x - y) + (1 - k) y \Vert \\ 
		&\ge& \Vert x - y \Vert - \Vert (1 - k) y \Vert \\ 
		&=& \Vert x \Vert + \Vert y \Vert - (1 - k) \Vert y \vert \\ 
		&=& \Vert x \Vert + \Vert k y \Vert. 
	\end{eqnarray*} 
	Also $\Vert x \pm k y \Vert \le \Vert x \Vert + \Vert k y \Vert$ so that $\Vert x \pm k y \Vert = \Vert x \Vert + \Vert k y \Vert$ whenever $0 \le k \le 1$. Further, for $k > 1$, we have 
	$$\Vert x \pm k y \Vert = k \Vert k^{-1} x \pm y \Vert = k (\Vert k^{-1} x \Vert + \Vert y \Vert) = \Vert x \Vert + \Vert k y \Vert.$$ 
	Hence $x \perp_1 y$.
\end{proof} 
We prove a similar characterization for $\infty$-orthogonality. To this end, we apply the following result that describes a duality between the two types of orthogonality.
\begin{lemma}\label{pd}
	Let $X$ be a non-zero real normed linear space and let $x, y \in X \setminus \lbrace 0 \rbrace$.
	\begin{enumerate}
		\item If $x \perp_1 y$, then $(\Vert x \Vert^{-1} x + \Vert y \Vert^{-1} y) \perp_{\infty} (\Vert x \Vert^{-1} x - \Vert y \Vert^{-1} y)$. 
		\item If $x \perp_{\infty} y$, then $(\Vert x \Vert^{-1} x + \Vert y \Vert^{-1} y) \perp_1 (\Vert x \Vert^{-1} x - \Vert y \Vert^{-1} y)$. 
	\end{enumerate}
\end{lemma}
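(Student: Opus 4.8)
The plan is to reduce both statements to the case of unit vectors and then read off the conclusions from Proposition \ref{orth1} together with a single elementary norm computation. First I would record that both $\perp_1$ and $\perp_{\infty}$ are invariant under scaling each argument by a nonzero scalar: for $\alpha \ne 0$ one has $\Vert \alpha x + k y \Vert = |\alpha| \, \Vert x + (k/\alpha) y \Vert$, and feeding this into the two definitions shows that $x \perp_1 y$ (resp.\ $x \perp_{\infty} y$) is equivalent to $a \perp_1 b$ (resp.\ $a \perp_{\infty} b$), where $a := \Vert x \Vert^{-1} x$ and $b := \Vert y \Vert^{-1} y$ are unit vectors. Thus it suffices to prove: (1$'$) if $a \perp_1 b$ then $(a+b) \perp_{\infty} (a - b)$, and (2$'$) if $a \perp_{\infty} b$ then $(a+b) \perp_1 (a-b)$, for $\Vert a \Vert = \Vert b \Vert = 1$.

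For (1$'$), the key observation is that $1$-orthogonality of unit vectors upgrades to full additivity of the norm on $\mathrm{span}\{a,b\}$: since $\Vert a + kb \Vert = 1 + |k|$ for all $k$, factoring out $\alpha$ gives $\Vert \alpha a + \beta b \Vert = |\alpha| + |\beta|$ for all real $\alpha, \beta$. I would then compute $(a+b) + k(a-b) = (1+k) a + (1-k) b$, whence $\Vert (a+b) + k(a-b) \Vert = |1+k| + |1-k| = 2 \max\{1, |k|\}$. Taking $k = \pm 1$ gives $\Vert a + b \Vert = \Vert a - b \Vert = 2$, so the right-hand side equals $\max\{\Vert a+b \Vert, |k| \, \Vert a - b \Vert\}$, which is precisely the defining identity for $(a+b) \perp_{\infty} (a-b)$. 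The only thing to check is the scalar identity $|1+k| + |1-k| = 2\max\{1,|k|\}$, handled by splitting into the cases $|k| \le 1$ and $|k| > 1$.

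For (2$'$), I would invoke Proposition \ref{orth1}, which reduces $(a+b) \perp_1 (a-b)$ to verifying $\Vert (a+b) \pm (a-b) \Vert = \Vert a + b \Vert + \Vert a - b \Vert$. From $a \perp_{\infty} b$ and $\Vert a \Vert = \Vert b \Vert = 1$ one reads off $\Vert a + b \Vert = \Vert a - b \Vert = \max\{1,1\} = 1$, while $(a+b)+(a-b) = 2a$ and $(a+b)-(a-b) = 2b$ both have norm $2 = 1 + 1$. Hence the hypothesis of Proposition \ref{orth1} holds and $(a+b) \perp_1 (a-b)$ follows.

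The routine part is the scale-invariance reduction and the arithmetic of absolute values; the one genuinely load-bearing step is the upgrade in (1$'$) from the single-variable relation $\Vert a + kb \Vert = 1 + |k|$ to the two-variable identity $\Vert \alpha a + \beta b \Vert = |\alpha| + |\beta|$, since it is this that turns the $\perp_1$ hypothesis into the $\perp_{\infty}$ conclusion after the substitution $\alpha = 1+k$, $\beta = 1-k$. Part (2$'$) is essentially immediate once Proposition \ref{orth1} is in hand, so I expect (1$'$) to be where all the content lies.
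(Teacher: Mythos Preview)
Your proof is correct and follows essentially the same route as the paper: reduce to unit vectors, then for (1) compute $\Vert (1+k)a + (1-k)b \Vert = |1+k| + |1-k| = 2\max\{1,|k|\}$ directly from $\perp_1$, and for (2) argue via the $\perp_\infty$ hypothesis. The only minor difference is that in (2$'$) you invoke Proposition~\ref{orth1} to reduce to checking the $\pm$ signs, whereas the paper instead verifies the full $\perp_1$ identity directly using $\max\{|1+k|,|1-k|\} = 1 + |k|$; both are equally short.
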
 
\begin{proof}
	Since $x \perp_p y$ if and only if $\Vert x \Vert^{-1} x \perp_p \Vert y \Vert^{-1} y$ for $p = 1$ or $\infty$, without any loss of generality, we may assume that $\Vert x \Vert = 1 = \Vert y \Vert$. 
	
	Let $x \perp_1 y$. Then $\Vert x \pm y \Vert = 2$. Thus for any $k \in \mathbb{R}$, we have 
	\begin{eqnarray*}
		\Vert (x + y) + k (x - y) \Vert &=& \Vert (1 + k) x + (1 - k) y \Vert \\
		&=& \Vert (1 + k) x \Vert + \Vert (1 - k) y \Vert \\ 
		&=& \vert 1 + k \vert + \vert 1 - k \vert \\ 
		&=& 1 + \vert k \vert + \vert 1 - \vert k \vert \vert \\ 
		&=& 2 \left( \max \lbrace 1, \vert k \vert \rbrace \right) \\ 
		&=& \max \lbrace \Vert x + y \Vert, \Vert k (x - y) \Vert \rbrace.
	\end{eqnarray*} 
	Hence $(x + y) \perp_{\infty} (x - y)$. 
	
	Next, we assume that $x \perp_{\infty} y$. Then $\Vert x \pm y \Vert = 1$. Thus for any $k \in \mathbb{R}$, we have 
	\begin{eqnarray*}
		\Vert (x + y) + k (x - y) \Vert &=& \Vert (1 + k) x + (1 - k) y \Vert \\
		&=& \max \lbrace \Vert (1 + k) x \Vert, \Vert (1 - k) y \Vert \rbrace \\ 
		&=& \max \lbrace \vert 1 + k \vert, \vert 1 - k \vert \rbrace \\ 
		&=& 1 + \vert k \vert \\ 
		&=& \Vert x + y \Vert + \Vert k (x - y) \Vert.
	\end{eqnarray*} 
	Hence $(x + y) \perp_1 (x - y)$. 
\end{proof} 
\begin{proposition}\label{oinfty}
	Let $X$ be a non-zero real normed linear space and let $x, y \in X \setminus \lbrace 0 \rbrace$. Then $x \perp_{\infty} y$ if and only if $\left\Vert \Vert x \Vert^{-1} x \pm \Vert y \Vert^{-1} y \right\Vert = 1$. 
\end{proposition}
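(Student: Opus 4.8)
The plan is to reduce to the normalized case and then exploit the duality between $\perp_1$ and $\perp_{\infty}$ recorded in Lemma \ref{pd}, feeding it through the $\pm$-characterization of $1$-orthogonality in Proposition \ref{orth1}. Since $x \perp_{\infty} y$ holds if and only if $\Vert x \Vert^{-1} x \perp_{\infty} \Vert y \Vert^{-1} y$ (as already noted at the start of the proof of Lemma \ref{pd}), I would first replace $x, y$ by their normalizations and assume $\Vert x \Vert = 1 = \Vert y \Vert$. The claim then reads: $x \perp_{\infty} y$ if and only if $\Vert x \pm y \Vert = 1$.

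The forward implication is immediate from the definition of $\perp_{\infty}$: taking $k = \pm 1$ in $\Vert x + k y \Vert = \max \lbrace \Vert x \Vert, \Vert k y \Vert \rbrace$ gives $\Vert x \pm y \Vert = \max \lbrace 1, 1 \rbrace = 1$. The substantive direction is the converse. Here I would set $a = x + y$ and $b = x - y$, so that the hypothesis $\Vert x \pm y \Vert = 1$ says precisely $\Vert a \Vert = 1 = \Vert b \Vert$, while $a + b = 2x$ and $a - b = 2y$. Consequently $\Vert a + b \Vert = 2 \Vert x \Vert = 2 = \Vert a \Vert + \Vert b \Vert$ and likewise $\Vert a - b \Vert = 2 \Vert y \Vert = 2 = \Vert a \Vert + \Vert b \Vert$, so Proposition \ref{orth1} yields $a \perp_1 b$. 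Applying Lemma \ref{pd}(1) to the unit vectors $a, b$ then converts this into $(a + b) \perp_{\infty} (a - b)$, that is, $2x \perp_{\infty} 2y$. Finally, because $\perp_{\infty}$ is invariant under nonzero rescaling of each argument (directly from the definition, pulling scalars through the norm and the $\max$), this gives $x \perp_{\infty} y$, as required.

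I do not expect a genuine analytic obstacle: the whole content is the substitution $a = x + y$, $b = x - y$, which trades the $\infty$-orthogonality of $x, y$ for the $1$-orthogonality of their sum and difference, precisely where the easy $\pm$-criterion of Proposition \ref{orth1} is available. The only points requiring care are bookkeeping ones—checking that normalization and the homogeneity of $\perp_{\infty}$ are legitimate, and verifying that the hypothesis $\Vert x \pm y \Vert = 1$ is exactly what certifies $a \perp_1 b$—all of which follow straight from the definitions and the two results already established.
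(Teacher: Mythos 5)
Your proposal is correct and follows essentially the same route as the paper: normalize, handle the forward direction by taking $k = \pm 1$, and for the converse substitute $a = x+y$, $b = x-y$, apply Proposition \ref{orth1} to get $a \perp_1 b$, and then Lemma \ref{pd}(1) to recover $x \perp_{\infty} y$. No gaps.
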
 
\begin{proof}
	For simplicity and without any loss of generality, we assume that $\Vert x \Vert = 1 = \Vert y \Vert$. If $x \perp_{\infty} y$, then $\Vert x \pm y \Vert = 1$. Conversely, we assume that $\Vert x \pm y \Vert = 1$. Put $x + y = u$ and $x - y = v$. Then $\Vert u \Vert = 1 = \Vert v \Vert$. Also $u + v = 2 x$ and $u - v = 2 y$ so that $\Vert u \pm v \Vert = 2$. Thus by Proposition \ref{orth1}, we get $u \perp_1 v$. Now, by Lemma \ref{pd}(1), we have $(u + v) \perp_{\infty} (u - v)$, or equivalently, $x \perp_{\infty} y$. 
\end{proof} 

\section{Expanding orthogonality} 

We take a cue from Corollary \ref{poud} and introduce the following notion to strengthen  $\infty$-orthogonality. 
\begin{definition}
	Let $(V, e)$ be an order unit space and assume that $u, v \in V^+$. We say that $u$ is \emph{extensively $\infty$-orthogonal} to $v$, (we write $u \perp_{\infty}^e v$), if $[- u, u] \perp_{\infty} [- v, v]$. 
\end{definition} 
In \cite{K16, K18}, the author had introduced and studied the notion of \emph{absolutely $\infty$-orthogonality}. Let $(V, e)$ be an order unit space and assume that $u, v \in V^+$. We say that $u$ is \emph{absolutely $\infty$-orthogonal} to $v$ ($u \perp_{\infty}^a v$) if $[0, u] \perp_{\infty} [0, v]$. Thus extensive $\infty$-orthogonality is apparently stronger than absolute $\infty$-orthogonality. However, we show that the two notions coincide in any absolute order unit space. 
\begin{theorem}\label{e=a}
	Let $V$ be an absolute order unit space and assume that $u, v \in V^+$ with $u \perp_{\infty}^a v$. Then $u \perp_{\infty}^e v$. 
\end{theorem}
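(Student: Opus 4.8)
The plan is to verify the defining condition of $u \perp_{\infty}^e v$ head on: fix arbitrary $a \in [-u,u]$ and $b \in [-v,v]$ and show $a \perp_{\infty} b$, i.e.\ $\Vert a + kb\Vert = \max\{\Vert a\Vert, |k|\,\Vert b\Vert\}$ for every $k \in \mathbb{R}$ (the cases $a=0$ or $b=0$ being trivial). Here I would exploit the absolute value: from $-u \le a \le u$ and $-v \le b \le v$ one gets $|a| \le u$ and $|b| \le v$, so $|a| \in [0,u]$ and $|b|\in[0,v]$. The hypothesis $u \perp_{\infty}^a v$, namely $[0,u]\perp_{\infty}[0,v]$, then forces $|a| \perp_{\infty} |b|$, whence $\Vert\,|a| + k|b|\,\Vert = \max\{\Vert a\Vert, |k|\,\Vert b\Vert\}$ for all $k$, using $\Vert\,|a|\,\Vert = \Vert a\Vert$ and $\Vert\,|b|\,\Vert = \Vert b\Vert$.

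The upper bound is the easy half and uses only the absolute value axioms together with monotonicity of the order unit norm. From $-|a| \le a \le |a|$ and $-|k|\,|b| \le kb \le |k|\,|b|$ I get $-(|a| + |k|\,|b|) \le a + kb \le |a| + |k|\,|b|$, hence $|a + kb| \le |a| + |k|\,|b|$; applying $\Vert w\Vert = \Vert\,|w|\,\Vert$ and monotonicity yields $\Vert a + kb\Vert \le \Vert\,|a| + |k|\,|b|\,\Vert = \max\{\Vert a\Vert, |k|\,\Vert b\Vert\}$. Note that this step does not use orthogonality beyond $|a|\perp_\infty|b|$.

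The crux is the reverse inequality $\Vert a + kb\Vert \ge \max\{\Vert a\Vert, |k|\,\Vert b\Vert\}$, and I would obtain it by upgrading the above to the \emph{identity} $|a + kb| = |a| + |k|\,|b|$. The mechanism is that in an absolute order unit space the assumption $u \perp_{\infty}^a v$ is equivalent (via the results of \cite{K16,K18}) to the intrinsic order-theoretic orthogonality of $u$ and $v$, and orthogonality should force the absolute values of elements drawn from $[-u,u]$ and from $[-v,v]$ to add, the two symmetric intervals lying in complementary orthogonal components of $V$. Granting $|a+kb| = |a|+|k|\,|b|$, the required norm equality is immediate from the first paragraph, and then $a \perp_\infty b$ for all such $a,b$ is exactly $u \perp_\infty^e v$.

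I expect the identity $|a+kb| = |a|+|k|\,|b|$ to be the main obstacle. An equivalent and perhaps more transparent formulation is that for each $a$ there is a state $\phi$ (a positive functional with $\phi(e)=1$) with $|\phi(a)| = \Vert a\Vert$ and $\phi(v) = 0$: since $0 \le |b| \le v$, the condition $\phi(v)=0$ forces $\phi(|b|)=0$ and hence $\phi(b)=0$, so $|\phi(a+kb)| = \Vert a\Vert$ and the lower bound for the $\Vert a\Vert$ term follows (the $|k|\,\Vert b\Vert$ term being symmetric, with the roles of $a$ and $b$ interchanged). The difficulty is that $\sup\{\phi(a): \phi\ \text{a state},\ \phi(v)=0\} = \Vert a\Vert$ cannot be deduced from the order unit structure and the absolute value axioms alone: a general order unit space admits states that charge $v$ while nearly maximising $a$, and it is precisely the orthogonal decomposition property defining an \emph{absolute} order unit space that rules these out. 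Carrying out this separation from the axioms, using the intrinsic orthogonality of $u$ and $v$, is the heart of the argument, and is also the step that explains why the hypothesis of being an absolute order unit space (rather than merely an order unit space) is needed.
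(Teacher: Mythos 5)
Your plan founders at its very first substantive step. In a general absolute order unit space the relation $-u \le a \le u$ does \emph{not} imply $|a| \le u$: the absolute value supplied by the axioms is not a least positive upper bound of $\pm a$. Already in $M_2(\mathbb{C})_{sa}$ (the motivating example of an absolute order unit space), taking $a = \mathrm{diag}(1,-1)$ and $c = \left(\begin{smallmatrix} 3/2 & 1 \\ 1 & 3/2 \end{smallmatrix}\right)$ one has $\pm a \le c$ while $|a| = I \not\le c$. Consequently the assertions $|a| \in [0,u]$ and $|b| \in [0,v]$ are unjustified, and with them the deduction of $|a| \perp_{\infty} |b|$ from $[0,u] \perp_{\infty} [0,v]$; the same false minimality principle is used again when you pass from $\pm(a+kb) \le |a|+|k|\,|b|$ to $|a+kb| \le |a|+|k|\,|b|$. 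The paper reaches $|a| \perp_{\infty} |b|$ by a different route: write $a = a_1 - a_2$ with $a_1 = \frac12(u+a)$, $a_2 = \frac12(u-a) \in [0,u]$ (similarly for $b$), so that $a_i \perp_{\infty}^a b_j$ by hypothesis, and then invoke the orthogonality calculus built into the definition of an absolutely ordered space (if $p \perp q$ and $p \perp r$ then $p \perp |q - r|$) to conclude $|a| = |a_1 - a_2| \perp_{\infty}^a |b_1 - b_2| = |b|$.

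The second, and larger, gap is one you acknowledge yourself: the lower bound $\Vert a + kb \Vert \ge \max\{\Vert a\Vert, |k|\,\Vert b\Vert\}$ is never proved. Neither the identity $|a+kb| = |a| + |k|\,|b|$ nor the existence of a norm-attaining state annihilating $v$ is established, and you correctly observe that these do not follow from the order unit and absolute value axioms by soft arguments; as written, the proposal is a plan whose hardest step is missing. The paper avoids this difficulty entirely by not quantifying over $k$: after normalizing so that $\Vert a \Vert = \Vert\, |a|\, \Vert = \Vert b \Vert = \Vert\, |b|\, \Vert = 1$ (legitimate because $\perp_{\infty}$ and $\perp_{\infty}^a$ are scale-invariant and the remainder of the argument uses only $\pm a \le |a|$, $\pm b \le |b|$ and $|a| \perp_{\infty} |b|$), Proposition \ref{oinfty} reduces $a \perp_{\infty} b$ to the single equality $\Vert a \pm b \Vert = 1$. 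That follows from a two-line sandwich: $\pm(a \pm b) \le |a| + |b|$ together with $\Vert\, |a| + |b|\, \Vert = \max\{1,1\} = 1$ gives $\Vert a \pm b\Vert \le 1$, while $2 = \Vert 2a \Vert \le \Vert a+b\Vert + \Vert a - b\Vert$ forces both norms to equal $1$. If you wish to salvage your approach, replace the first step by the decomposition above and the ``for all $k$'' verification by this appeal to Proposition \ref{oinfty}.
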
 
\begin{proof}
	Let $u_0 \in [- u, u]$ and $v_0 \in [- v, v]$. Put $u_1 = \frac 12 (u + u_0)$, $u_2 = \frac 12 (u - u_0)$, $v_1 = \frac 12 (v + v_0)$ and $v_2 = \frac 12 (v - v_0)$. Then $u_1, u_2 \in [0, u]$ and $v_1, v_2 \in [0, v]$. Since $u \perp_{\infty}^a v$, we get that $u_i \perp_{\infty}^a v_j$ for $i, j \in \lbrace 1, 2 \rbrace$. Thus $u_i \perp_{\infty}^a \vert v_1 - v_2 \vert$ for $i = 1, 2$ so that $\vert u_1 - u_2 \vert \perp_{\infty}^a \vert v_1 - v_2 \vert$. In other words, $\vert u_0 \vert \perp_{\infty}^a \vert v_0 \vert$ for $u_1 - u_2 = u_0$ and $v_1 - v_2 = v_0$. Since  $\vert u_0 \vert \perp_{\infty}^a \vert v_0 \vert$ implies that $\lambda \vert u_0 \vert \perp_{\infty}^a \mu \vert v_0 \vert$ for all $\lambda, \mu \ge 0$, we may assume that $\Vert u_0 \Vert = \Vert \vert u_0 \vert \Vert = 1 = \Vert v_0 \Vert = \Vert \vert v_0 \vert \Vert$. Then $\Vert \vert u_0 \vert + \vert v_0 \vert \Vert = 1$. Now as $\pm u_0 \le \vert u_0 \vert$ and $\pm v_0 \le \vert v_0 \vert$, we have $\pm (u_0 \pm v_0) \le \vert u_0 \vert + \vert v_0 \vert$. Thus 
	$$\Vert u_0 \pm v_0 \Vert \le \Vert \vert u_0 \vert + \vert v_0 \vert \Vert = 1.$$ 
	Set $u_0 + v_0 = x$ and $u_0 - v_0 = y$ so that $\Vert x \Vert \le 1$ and $\Vert y \Vert \le 1$. Also $x + y = 2 u_0$ so that $\Vert x + y \Vert = 2$. Thus 
	$$2 = \Vert x + y \Vert \le \Vert x \Vert + \Vert y \Vert \le 2$$ 
	so that $\Vert x \Vert = 1 = \Vert y \Vert$. In other words, $\Vert u_0 \pm v_0 \Vert = 1$. Thus by Proposition \ref{oinfty}, we have $u_0 \perp_{\infty} v_0$. Since $u_0 \in [- u, u]$ and $v_0 \in [- v, v]$ are arbitrary, we conclude that $u \perp_{\infty}^e v$. 
\end{proof} 
\begin{corollary}
	Let $A$ be a C$^*$-algebra and assume that $a, b \in A^+$. Then the following facts are equivalent: 
	\begin{enumerate}
		\item $a \perp_{\infty}^e b$; 
		\item $a \perp_{\infty}^a b$; and 
		\item $a b = 0$.
	\end{enumerate}
\end{corollary}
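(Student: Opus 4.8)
The plan is to recognize that two of the three links in the equivalence are already in hand, so that only one genuine implication remains. The equivalence $(2) \Leftrightarrow (3)$ is not new: it is exactly the characterization, recalled in the introduction and due to \cite{K16, K18}, that for $a, b \in A^+$ one has $ab = 0$ if and only if $[0, a] \perp_{\infty} [0, b]$, which is the definition of $a \perp_{\infty}^a b$. I would simply quote this.

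The implication $(1) \Rightarrow (2)$ is immediate from the definitions and requires no computation. Extensive $\infty$-orthogonality means $[-a, a] \perp_{\infty} [-b, b]$; since $[0, a] \subseteq [-a, a]$ and $[0, b] \subseteq [-b, b]$, every $x \in [0, a]$ and $y \in [0, b]$ automatically satisfy $x \perp_{\infty} y$, giving $[0, a] \perp_{\infty} [0, b]$, i.e. $a \perp_{\infty}^a b$.

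The substantive step is $(2) \Rightarrow (1)$, and here I would invoke Theorem \ref{e=a}, which upgrades absolute $\infty$-orthogonality to extensive $\infty$-orthogonality in any absolute order unit space. Thus the whole argument reduces to exhibiting $A$ as such a space. Concretely, the self-adjoint part $A_{sa}$ of a unital C$^*$-algebra, with positive cone $A^+$, order unit $1$, and absolute value $|a| = (a^2)^{1/2}$, is an absolute order unit space; with this identification, Theorem \ref{e=a} applied to $u = a$ and $v = b$ delivers $a \perp_{\infty}^e b$, closing the cycle and yielding the equivalence of all three statements.

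The main obstacle is therefore structural rather than computational: one must justify that $A_{sa}$ genuinely satisfies the axioms of an absolute order unit space (this is where the C$^*$-functional calculus enters, since $A_{sa}$ is typically not a lattice and the relevant absolute value is the one coming from $|a| = (a^2)^{1/2}$, not a lattice supremum), and, if $A$ is non-unital, one must first pass to the unitization $\tilde A$. In the latter case I would check that for $a, b \in A^+$ the order intervals $[0, a]$, $[-a, a]$ and the norms entering $\perp_{\infty}^a$ and $\perp_{\infty}^e$ are computed identically in $A$ and in $\tilde A$, using the standard fact that $0 \le x \le a$ with $a \in A^+$ forces $x \in A$. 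Once these verifications are in place, the three implications are established.
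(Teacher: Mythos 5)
Your proof is correct, and for unital $A$ it coincides with the paper's: both quote \cite{K16, K18} for $(2)\Leftrightarrow(3)$, note that $(1)\Rightarrow(2)$ is trivial from $[0,a]\subseteq[-a,a]$, and invoke Theorem \ref{e=a} on the absolute order unit space $A_{sa}$. Where you genuinely diverge is the non-unital case. You reduce it to the unital case by passing to the unitization $\tilde A$ and checking that the order intervals $[0,a]$, $[-a,a]$ and the norm are unchanged (the key point being that $0\le x\le a$ with $a\in A^+$ forces $x\in A$, via the character on $\tilde A/A$); this is a clean, uniform reduction that reuses Theorem \ref{e=a} and avoids any further computation. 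The paper instead proves $(3)\Rightarrow(1)$ directly in the non-unital case: given $c\in[-a,a]$ and $d\in[-b,b]$, it decomposes $c=c_1-c_2$, $d=d_1-d_2$ with $c_i\in[0,a]$, $d_j\in[0,b]$, deduces $c_id_j=0$ and hence $cd=0$, and then gets $\Vert c+kd\Vert=\max\lbrace\Vert c\Vert,\Vert kd\Vert\rbrace$ from the C$^*$-identity $\Vert c+kd\Vert^2=\Vert c^2+k^2d^2\Vert$. The paper's route buys an explicit algebraic picture (it shows that every element of $[-a,a]$ annihilates every element of $[-b,b]$, which is of independent interest); your route buys economy, since the only C$^*$-specific input is the standard structure of the unitization. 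Both are complete arguments.
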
 
\begin{proof}
	Evidently, $(1)$ implies $(2)$. It was proved in \cite{K16, K18} that $(2)$ is equivalent to $(3)$. If $A$ is unital, then $A_{sa}$ is an absolute order unit space. Thus by Theorem \ref{e=a}, $(1)$ is equivalent to $(2)$. So let $A$ be non-unital. We only need to prove that $(3)$ implies $(1)$. 
	
	Assume that $a b = 0$. Let $c \in [- a, a]$ and $d \in [- b, b]$. Put $c_1 = \frac 12 (a + c)$, $c_2 = \frac 12 (a - c)$, $d_1 = \frac 12 (b + d)$ and $d_2 = \frac 12 (b - d)$. Then $c_1, c_2 \in [0, a]$, $d_1, d_2 \in [0, b]$. Also then $c = c_1 - c_2$ and $d = d_1 - d_2$. As $a b = 0$, we have $a \perp_{\infty}^a b$ so that $c_i \perp_{\infty}^a d_j$ for $i, j \in \lbrace 1, 2 \rbrace$. Thus $c_i d_j = 0$ for $i, j \in \lbrace 1, 2 \rbrace$ whence $c d = 0$. Let $k \in \mathbb{R}$. Then 
	\begin{eqnarray*}
		\Vert c + k d \Vert^2 &=& \Vert (c + k d)^2 \Vert \\
		&=& \Vert c^2 + k^2 d^2 \Vert \\ 
		&=& \max \lbrace \Vert c^2 \Vert, \Vert k^2 d^2 \Vert \rbrace 
	\end{eqnarray*} 
	so that $\Vert c + k d \Vert = \max \lbrace \Vert c \Vert, \Vert k d \Vert \rbrace$ for all $k \in \mathbb{R}$. Thus $c \perp_{\infty} d$. Since $c \in [-a, a]$ and $d \in [- b, b]$ are arbitrary, we have $a \perp_{\infty}^e b$. 
\end{proof}

\section{Order unit property}

In this section, we prove the main result. First, we prove Theorem \ref{oup} and for this purpose, we use the following result.
\begin{lemma}\label{oupl}
	Let $(V, e)$ be an order unit space and assume that $u \in V^+$. Then $u$ has the OUP in $V$ if and only if $\Vert \Vert v \Vert (e - u) \pm v \Vert = \Vert v \Vert$ whenever $v \in V$ with $v \in [- u, u]$. 
\end{lemma}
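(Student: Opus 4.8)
The plan is to pass everything through the order unit norm $\Vert w\Vert=\inf\{\lambda\ge 0:-\lambda e\le w\le\lambda e\}$, so that the asserted norm equalities become order inequalities which can be compared directly with the order unit property. Throughout I fix $v\in[-u,u]$ and abbreviate $c=\Vert v\Vert$. I would begin by recording the normalization $0\le u\le e$: for $u\notin\{0,e\}$ the order unit property forces $u\in S_V$, hence $\Vert u\Vert=1$ and $u\le e$, while the cases $u=0$ and $u=e$ are immediate. Securing $u\le e$ is the input that makes the forward estimates have the correct sign.

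For the forward implication, assume $u$ has the OUP. As $\lambda=1$ witnesses $u\pm v\in V^+$, the property yields $\Vert v\Vert u\pm v\in V^+$, i.e. $-cu\le v\le cu$. Using $v\le cu$ I get $c(e-u)+v\le ce$, and using $u\le e$ I get $c(e-u)+v\ge v\ge -cu\ge -ce$; the symmetric computation bounds $c(e-u)-v$, so that $\Vert c(e-u)\pm v\Vert\le c$. The reverse inequality is the one genuinely delicate point. Rather than estimate either norm from below, I would observe that the two elements differ by $2v$, whence the triangle inequality gives $2c=\Vert 2v\Vert\le\Vert c(e-u)+v\Vert+\Vert c(e-u)-v\Vert\le 2c$; this forces each summand to equal $c$, and notably avoids any appeal to the value of $\Vert e-u\Vert$.

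For the converse, assume $\Vert c(e-u)\pm v\Vert=c$ for every $v\in[-u,u]$. Since $\Vert c(e-u)+v\Vert=c$ gives $-ce\le c(e-u)+v\le ce$, the right-hand inequality reads $v\le cu=\Vert v\Vert u$; similarly $\Vert c(e-u)-v\Vert=c$ yields $-v\le cu$, that is $-\Vert v\Vert u\le v$. Hence $\Vert v\Vert u\pm v\in V^+$ for all $v\in[-u,u]$. To recover the OUP for an arbitrary $v$ with $\lambda u\pm v\in V^+$ for some $\lambda>0$, I would rescale: $\lambda^{-1}v\in[-u,u]$, apply the previous line to it, and multiply back by $\lambda$ (the case $\lambda=0$ forcing $v=0$).

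The main obstacle I anticipate is the reverse inequality in the forward direction; the difference-of-elements trick upgrading $\Vert c(e-u)\pm v\Vert\le c$ to equality is the crux of the argument. A secondary, but essential, point is the normalization $u\le e$, without which both the estimates and the statement itself fail. Granting these, each implication reduces to elementary manipulation of order inequalities in $(V,e)$.
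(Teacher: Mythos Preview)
Your argument mirrors the paper's almost line for line: both directions use the same order manipulations, and the crux---upgrading $\Vert c(e-u)\pm v\Vert\le c$ to equality via the triangle inequality applied to the difference $2v$---is exactly the paper's device. One caution on your preliminary normalization: the claim that OUP forces $u\in S_V$ (hence $u\le e$) is precisely what Steps~1--2 in the proof of Theorem~\ref{oup} establish, and that proof invokes the present lemma; the paper's own proof of the lemma likewise uses $e-u\ge 0$ without comment at this stage, so you have made an existing dependency explicit rather than introduced a new gap.
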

\begin{proof}
	Assume that $u$ has the OUP in $V$. Let $v \in V$ with $v \ne 0$ be such that $v \in [- u, u]$. As $u$ has OUP in $V$, we get  $\Vert v \Vert^{-1} v \in [- u, u]$. Then 
	$$e - \left( (e - u) \pm \Vert v \Vert^{-1} v \right) = u \mp \Vert v \Vert^{-1} v \in V^+.$$ 
	Also 
	$$e + \left( (e - u) \pm \Vert v \Vert^{-1} v \right) \ge e \pm \Vert v \Vert^{-1} v \ge 0$$ 
	so that $e \pm \left( (e - u) \pm \Vert v \Vert^{-1} v \right) \in V^+$. Therefore, $\Vert (e - u) \pm \Vert v \Vert^{-1} v \Vert \le 1$. Now as 
	$$2 = 2 \Vert \Vert v \Vert^{-1} v \Vert \le \Vert (e - u) + \Vert v \Vert^{-1} v \Vert + \Vert (e - u) - \Vert v \Vert^{-1} v \Vert \le 2,$$ 
	We must have $\Vert (e - u) \pm \Vert v \Vert^{-1} v \Vert = 1$. Thus $\Vert \Vert v \Vert (e - u) \pm v \Vert = \Vert v \Vert$ whenever $v \in V$ with $v \in [- u, u]$.
	
	Conversely, we assume that $\Vert \Vert v \Vert (e - u) \pm v \Vert = \Vert v \Vert$ whenever $v \in V$ with $v \in [- u, u]$. Let $v \in V$ be such that $\lambda u \pm v \in V^+$ for some $\lambda \in \mathbb{R}$ with $\lambda > 0$. Then $\lambda^{-1} v \in [- u, u]$. Thus by assumption, $\Vert \Vert v \Vert (e - u) \pm v \Vert = \Vert v \Vert$. It follows that $\Vert v \Vert (e - u) \pm v \le \Vert v \Vert e$ whence $\pm v \le \Vert v \Vert u$. Hence $u$ has the OUP in $V$. 
\end{proof}
\begin{proof} (of Theorem \ref{oup}.) 
	First, we assume that $u$ has the OUP in $V$. 
	
	{\bf Step 1.} $\Vert u \Vert = 1$:  
	
	As $u \in [- u, u]$, by Lemma \ref{oupl}, we have $\Vert \Vert u \Vert (e - u) \pm u \Vert = \Vert u \Vert$ so that $\Vert u \Vert e \pm (\Vert u \Vert (e - u) \pm u) \in V^+$. Thus $(\Vert u \Vert + 1) u \le 2 \Vert u \Vert e$, and consequently, $(\Vert u \Vert + 1) \Vert u \Vert \le 2 \Vert u \Vert$. Since $u \ne 0$, we may deduce that $\Vert u \Vert \le 1$. Again, as $u$ has the OUP in $V$, we also have that $u \le \Vert u \Vert u$. Thus $\Vert u \Vert \ge 1$. Therefore, $\Vert u \Vert = 1$. 
	
	{\bf Step 2.} $\Vert e - u \Vert = 1$:  
	
	By Step 1, we have $\Vert u \Vert = 1$. Thus by \cite{K23}, there exists $\bar{u} \in S_V$ and $\alpha \in [0, 1]$ such that $u = (1 - \alpha) e + \alpha \bar{u}$. If possible, let $\alpha < 1$. Then $\pm e \le (1 - \alpha)^{-1} u$. As $u$ has the OUP in $V$, we get $\pm e \le \Vert e \Vert u = u$. In particular, $e \le u$. Since $\Vert u \Vert = 1$, we have $u \le e$. Thus $u = e$, contradicting the assumption. Hence $\alpha = 1$ and consequently, $u = \bar{u} \in S_V$. 
	
	Now invoking  Proposition \ref{oinfty} in the above proof, we may conclude the result. 
\end{proof} 
\begin{proof} (of Corollary \ref{poud}.) 
	That $(2)$ implies $(3)$ is evident and $(3)$ implies $(1)$ by Theorem \ref{oup}. We prove that $(1)$ implies $(2)$. 
	
	Assume that $u$ and $e - u$ have OUP in $V$ and let $v \in [- u, u]$ and $w \in [- (e - u), (e - u)]$. By the OUP of $u$ and $e - u$, we have $\Vert v \Vert^{-1} v \in [- u, u]$ and $\Vert w \Vert^{-1} w \in [- (e - u), (e - u)]$. So for simplicity of the proof, we assume that $\Vert v \Vert = 1 = \Vert w \Vert$. 
	
	Since $v \in [- u, u]$ and $w \in [- (e - u), (e - u)]$, we have $u \pm v \in V^+$ and $e - u \pm w \in V^+$. Thus $e \pm (v \pm w) \in V^+$ so that $\Vert v \pm w \Vert \le 1$. Set $v + w = x$ and $v - w = y$. Then $\Vert x \Vert \le 1$ and $\Vert y \Vert \le 1$. Also $x + y = 2 v$ so that $\Vert x + y \Vert = 2$. Thus $2 = \Vert x + y \Vert \le \Vert x \Vert + \Vert y \Vert \le 2$ whence $\Vert x \Vert = 1$ and $\Vert w \Vert = 1$. In other words, $\Vert v \pm w \Vert = 1$. Now by Proposition \ref{oinfty}, $v \perp_{\infty} w$. Hence $[- u, u] \perp_{\infty} [- (e - u), (e - u)]$. 
\end{proof} 
\begin{remark}
	Let $(V, e)$ be an order unit space and assume that $u \in S_V$ be an order projection in $V$. Put 
	$$V_u := \lbrace v \in V: \lambda u \pm v \in V^+ ~ \mbox{for some} ~ \lambda > 0 \rbrace.$$ 
	Then $(V_u, u)$ is an order unit space whose order unit norm coincides with that of $V$. Moreover, it is a closed normed order ideal of $V$ as well. Similarly, $(\langle e - u \rangle, e - u)$ is also an order unit space as well as a closed normed order ideal of $V$. Next, put 
	$$[u] := V_u + V_{e - u}.$$ 
	Then $[u]$ is a closed order unit subspace of $V$ (containing $e$). Further, if $v \in V_u$ and $w \in V_{e - u}$, then $v \perp_{\infty}w$ so that 
	$$\Vert v + w \Vert = \max \lbrace \Vert v \Vert, \Vert w \Vert \rbrace.$$ 
	Thus $[u]$ is isometrically isomorphic to $V_u \oplus_{\infty} V_{e - u}$. 
\end{remark} 
\begin{proposition}\label{sum}
	Let $(V, e)$ be an order unit space and assume that $u, v, w \in S_V$ with $u = v + w$ such that $u$ has the OUP in $V$. Then $v$ and $w$ also have the OUP in $V$ if and only if $v \perp_{\infty}^e w$. 
\end{proposition}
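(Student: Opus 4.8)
The plan is to reduce the statement to Corollary \ref{poud} by passing to the order unit space generated by $u$. Since $u$ has the OUP and $u \in S_V$ (so $\Vert u \Vert = 1$ and hence $u \le e$), I would work with the order ideal
$$V_u := \lbrace x \in V : \lambda u \pm x \in V^+ ~ \mbox{for some} ~ \lambda > 0 \rbrace,$$
equipped with order unit $u$ and order unit norm $\Vert x \Vert_u := \inf \lbrace \lambda > 0 : \lambda u \pm x \in V^+ \rbrace$. The decisive observation is that, because $u$ has the OUP, this norm coincides with the restriction of $\Vert \cdot \Vert$: the inequality $\Vert x \Vert_u \ge \Vert x \Vert$ always holds (from $\lambda u \pm x \in V^+$ and $u \le e$), while the OUP of $u$ gives $\Vert x \Vert\, u \pm x \in V^+$ for every $x \in V_u$, whence $\Vert x \Vert_u \le \Vert x \Vert$. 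Thus $(V_u, u)$ is an order unit space whose norm and positive cone are inherited from $V$. It is worth stressing that this coincidence requires only the OUP of $u$, which is weaker than $u$ being an order projection as assumed in the preceding Remark.

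In $(V_u, u)$ we have $v, w \in [0, u] \subseteq V_u$, $v + w = u$, and $\Vert v \Vert_u = \Vert v \Vert = 1 = \Vert w \Vert = \Vert u - v \Vert_u$, so that $v \in S_{V_u}$ and $w = u - v$. I would next check that both properties in the statement are intrinsic to $V_u$. For extensive $\infty$-orthogonality this is immediate: the intervals $[-v, v]$ and $[-w, w]$ and the norm are the same whether computed in $V$ or in $V_u$, so $v \perp_{\infty}^e w$ holds in $V$ if and only if it holds in $V_u$. For the OUP, note that $\lambda v \pm x \in V^+$ forces $\pm x \le \lambda v \le \lambda u$, hence $x \in V_u$; since the norm and cone agree on $V_u$, the element $v$ has the OUP in $V$ if and only if it has the OUP in $V_u$, and the same argument applies to $w$.

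With these identifications the statement becomes precisely Corollary \ref{poud} applied to the order unit space $(V_u, u)$ and the element $v \in S_{V_u}$: under the substitution $e \mapsto u$, $u \mapsto v$, the corollary's complement $e - u$ becomes $u - v = w$, and condition (1) there ($v$ and $u - v$ both have the OUP in $V_u$) is equivalent to condition (2) there ($v \perp_{\infty}^e (u - v)$, that is, $v \perp_{\infty}^e w$). This delivers both implications simultaneously. I expect the main obstacle to be the reduction step itself — verifying that the order unit norm of $V_u$ agrees with that of $V$ and that the OUP is unaffected by the passage from $V$ to $V_u$ — since everything afterwards is a direct citation of Corollary \ref{poud}; the hypothesis that $u$ has the OUP is exactly what makes the norm coincidence work, and without it the reduction breaks down.

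As a cross-check, the forward implication also admits a direct argument mirroring the proof of Corollary \ref{poud}: given $a \in [-v, v]$ and $b \in [-w, w]$, one normalizes using the OUP of $v$ and $w$ to assume $\Vert a \Vert = 1 = \Vert b \Vert$, observes that $a \pm b \in [-u, u]$ so $\Vert a \pm b \Vert \le \Vert u \Vert = 1$, and then deduces $\Vert a \pm b \Vert = 1$ from $\Vert (a + b) + (a - b) \Vert = 2 \Vert a \Vert = 2$, whence $a \perp_{\infty} b$ by Proposition \ref{oinfty}.
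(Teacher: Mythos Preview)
Your proof is correct, and the reduction is clean: the OUP of $u$ alone (not the full order-projection hypothesis of the Remark following Corollary~\ref{poud}) is indeed what forces $\Vert\cdot\Vert_u = \Vert\cdot\Vert$ on $V_u$, and once that is in hand the transfer of both the OUP and of extensive $\infty$-orthogonality between $V$ and $V_u$ goes through exactly as you describe, so Corollary~\ref{poud} in $(V_u,u)$ delivers both implications at once. The paper, by contrast, argues directly without passing to $V_u$. For the implication $v\perp_\infty^e w \Rightarrow v,w$ have OUP, it takes $v_1$ with $\lambda v \pm v_1 \in V^+$, uses $w \perp_\infty \lambda^{-1} v_1$ to obtain $\Vert w \pm \Vert v_1\Vert^{-1} v_1\Vert = 1$, then checks $k u \pm (w \pm \Vert v_1\Vert^{-1} v_1)\ge 0$ for $k=\max\{\lambda\Vert v_1\Vert^{-1},1\}$ and invokes the OUP of $u$ to conclude $v \pm \Vert v_1\Vert^{-1} v_1 \ge 0$; for the other implication its computation is exactly your cross-check. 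Your route is more conceptual and makes transparent that the proposition is Corollary~\ref{poud} relativized to the ideal $V_u$; the paper's direct argument avoids the bookkeeping of the reduction and, incidentally, shows that the implication ``$v,w$ have OUP $\Rightarrow v\perp_\infty^e w$'' uses only $\Vert u\Vert\le 1$ rather than the OUP of $u$.
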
 
\begin{proof}
	First, we assume that $v \perp_{\infty}^e w$. Let $v_1 \in V$ be such that $\lambda v \pm v_1 \in V^+$ for some $\lambda > 0$. Then $\lambda^{-1} v_1 \in [- v, v]$. Since $v \perp_{\infty}^e w$, we get that $w \perp_{\infty} \lambda^{-1} v_1$. Thus $\Vert w \pm \Vert v_1 \Vert^{-1} v_1 \Vert = 1$. 
	
	Next, put $k = \max \lbrace \lambda \Vert v_1 \Vert^{-1}, 1 \rbrace$. Then 
	\begin{eqnarray*}
		k u + (w \pm \Vert v_1 \Vert^{-1} v_1) &=& (k + 1) w + (k v \pm \Vert v_1 \Vert^{-1} v_1) \\ 
		&\ge& \Vert v_1 \Vert^{-1} (\lambda v \pm v_1) \ge 0
	\end{eqnarray*} 
	and 
	\begin{eqnarray*}
		k u - (w \pm \Vert v_1 \Vert^{-1} v_1) &=& (k - 1) w + (k v \mp \Vert v_1 \Vert^{-1} v_1) \\ 
		&\ge& \Vert v_1 \Vert^{-1} (\lambda v \mp v_1) \ge 0.
	\end{eqnarray*} 
	Since $u$ has OUP in $V$, we conclude that $u \pm (w \pm \Vert v_1 \Vert^{-1} v_1) \in V^+$ for $\Vert w \pm \Vert v_1 \Vert^{-1} v_1 \Vert = 1$. Thus $v \pm \Vert v_1 \Vert^{-1} v_1 \in V^+$. Since $v_1$ is arbitrary, we conclude that $v$ has OUP in $V$. Following in this way, we can now show that $w$ also has the OUP in $V$. 
	
	Conversely, we assume that $v$ and $w$ have OUP in $V$. Let $v_1 \in [- v, v]$ and $w_1 \in [- w, w]$. Since $v$ and $w$ have OUP in $V$, we get $\Vert v_1 \Vert^{-1} v_1 \in [- v, v]$ and $\Vert w_1 \Vert^{-1} w_1 \in [- w, w]$. So we may assume that $\Vert v_1 \Vert = 1 = \Vert w_1 \Vert$. Now $v_1 \pm w_1 \in [- u, u]$ so that $\Vert v_1 \pm w_1 \Vert \le 1$. Thus 
	\begin{eqnarray*}
		2 &=& \Vert 2 v_1 \Vert \\ 
		&=& \Vert (v_1 + w_1) + (v_1 - w_1) \Vert \\ 
		&\le& \Vert v_1 + w_1 \Vert + \Vert v_1 - w_1 \Vert \\ 
		&\le& 2
	\end{eqnarray*} 
	which leads to $\Vert v_1 + w_1 \Vert = 1 = \Vert v_1 - w_1 \Vert$. So by Proposition \ref{oinfty}, $v_1 \perp_{\infty} w_1$ and hence $v \perp_{\infty}^e w$. 
\end{proof}
\begin{remark}
	Let $(V, e)$ be an order unit space and assume that $u, v, w \in S_V$ with $u = v + w$ such that $u$ is an order projection in $V$. Expanding the proof of Proposition \ref{sum}, we can show that $v$ and $w$ are order projections in $V$ if and only if $v \perp_{\infty}^e w$.
\end{remark}

\section{An illustration}

Let $X$ be a real normed linear space. Consider $\widetilde{X} := \mathbb{R} \oplus_1 X$ and define 
$$\widetilde{X}^+ := \left\lbrace (\alpha, x) \in \widetilde{X}: x \in X ~ \mbox{and} ~ \Vert x \Vert \le \alpha \right\rbrace.$$ 
It is now a folklore that $(\widetilde{X}, \widetilde{X}^+, e)$ is an order unit space called an order unit space obtained by adjoining an order unit to a normed linear space where $e := (1, 0)$. (See, for example \cite{K21} and references therein.) 
\begin{lemma}
	$S_{\widetilde{X}} = \left\lbrace (\frac 12, x): \Vert x \Vert = \frac 12 \right\rbrace$.
\end{lemma}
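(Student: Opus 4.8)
The plan is to reduce everything to a direct computation with the order unit norm on $\widetilde{X}$, which I first make explicit. Since $\widetilde{X} = \mathbb{R} \oplus_1 X$ carries the $\ell^1$-sum norm, for $v = (\alpha, x)$ one has $\lambda e \pm v \in \widetilde{X}^+$ exactly when $\Vert x \Vert \le \lambda - \alpha$ and $\Vert x \Vert \le \lambda + \alpha$, that is, when $\lambda \ge |\alpha| + \Vert x \Vert$. Hence the order unit norm is $\Vert (\alpha, x) \Vert = |\alpha| + \Vert x \Vert$, and it agrees with the given $\ell^1$-norm, as is already recorded in the folklore cited above. I would state this identification at the outset so that both defining conditions of $S_{\widetilde{X}}$ turn into concrete scalar equalities.

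Next I would unwind the two membership conditions for $u = (\alpha, x) \in \widetilde{X}^+$. Positivity gives $\Vert x \Vert \le \alpha$ (so in particular $\alpha \ge 0$), the condition $\Vert u \Vert = 1$ reads $\alpha + \Vert x \Vert = 1$, and the condition $\Vert e - u \Vert = 1$ reads $|1 - \alpha| + \Vert x \Vert = 1$, using $e - u = (1 - \alpha, -x)$. From the first equality, $\Vert x \Vert = 1 - \alpha \ge 0$ forces $\alpha \le 1$, while combining $\Vert x \Vert = 1 - \alpha$ with the positivity bound $\Vert x \Vert \le \alpha$ forces $\alpha \ge \frac 12$; thus $\alpha \in [\frac 12, 1]$, and in particular $|1 - \alpha| = 1 - \alpha$.

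Substituting $|1 - \alpha| = 1 - \alpha$ and $\Vert x \Vert = 1 - \alpha$ into the second equality yields $2(1 - \alpha) = 1$, hence $\alpha = \frac 12$ and $\Vert x \Vert = \frac 12$. This establishes the inclusion $S_{\widetilde{X}} \subseteq \left\lbrace (\frac 12, x) : \Vert x \Vert = \frac 12 \right\rbrace$. For the reverse inclusion I would simply verify that any $u = (\frac 12, x)$ with $\Vert x \Vert = \frac 12$ lies in $\widetilde{X}^+$ (since $\Vert x \Vert = \frac 12 \le \frac 12 = \alpha$) and satisfies $\Vert u \Vert = \frac 12 + \frac 12 = 1$ as well as $\Vert e - u \Vert = |1 - \frac 12| + \Vert x \Vert = \frac 12 + \frac 12 = 1$, so that $u \in S_{\widetilde{X}}$.

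There is no serious obstacle here; the argument is elementary once the order unit norm is identified. The only point demanding care is the handling of the absolute value $|1 - \alpha|$: I must first extract the constraint $\alpha \le 1$ from the positivity-plus-norm conditions before I am entitled to replace $|1 - \alpha|$ by $1 - \alpha$, and I should also confirm that the positivity constraint $\Vert x \Vert \le \alpha$ is met with equality (rather than violated) at the critical value $\alpha = \frac 12$, which is exactly why the boundary case survives.
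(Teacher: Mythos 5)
Your proof is correct and follows essentially the same route as the paper: unwind the three conditions $\Vert x \Vert \le \alpha$, $\Vert (\alpha,x) \Vert = 1$, and $\Vert e - (\alpha,x) \Vert = 1$ into scalar equalities and solve for $\alpha = \frac 12 = \Vert x \Vert$, with the easy converse verification. Your treatment is slightly more careful than the paper's (you justify $|1-\alpha| = 1-\alpha$ before using it, which the paper does implicitly), but there is no substantive difference.
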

\begin{proof}
	Let $(\alpha, x) \in S_{\widetilde{X}}$. Then $(\alpha, x) \in \widetilde{X}^+$ with $\Vert (\alpha, x) \Vert = 1 = \Vert e - (\alpha, x) \Vert$. Thus $\Vert x \Vert \le \alpha$, $\Vert x \Vert = 1 - \alpha$ and $\Vert - x \vert = 1 - (1 - \alpha)$. In other words, $\Vert x \Vert = \frac 12 = \alpha$. This completes the proof as the verification of the converse part is easy.
\end{proof}
\begin{proposition}\label{aoup}
	Let $X$ be a non-zero real normed linear space and let $x \in X$ with $\Vert x \Vert = \frac 12$. Then $(\frac 12, x)$ has the OUP in $\widetilde{X}$ if and only if $\tilde{X}_{(\frac 12, x)} = \mathbb{R} (\frac 12, x)$. 
\end{proposition}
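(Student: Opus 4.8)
The plan is to make everything explicit in the $\oplus_1$ picture and reduce the statement to a clean condition on the ``equality set'' of the norm of $X$. Write $u = (\frac12, x)$ and recall that $\widetilde{X}_{u} := \{w \in \widetilde{X} : \lambda u \pm w \in \widetilde{X}^+ \text{ for some } \lambda > 0\}$. The crucial structural observation is that, because the cone is governed by the $\oplus_1$ norm, the two positivity constraints defining membership collapse into a triangle equality: for $w = (\gamma, z)$, if $\lambda u \pm w \in \widetilde{X}^+$ then $\|\lambda x - z\| \le \frac{\lambda}{2} - \gamma$ and $\|\lambda x + z\| \le \frac{\lambda}{2} + \gamma$, and since the two right-hand sides sum to $\lambda = \|2\lambda x\| \le \|\lambda x - z\| + \|\lambda x + z\|$, both inequalities are forced to be equalities. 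First I would use this to describe the order interval exactly as $[-u,u] = \{(\beta_y, y) : y \in E_x\}$, where $E_x := \{y \in X : \|x+y\| + \|x - y\| = 2\|x\|\}$ and $\beta_y := \frac{\|x+y\| - \|x-y\|}{2}$, and hence $\widetilde{X}_{u} = \bigcup_{\lambda > 0} \lambda [-u, u]$. A short computation (using that $y = tx$ with $|t| \le 1$ produces exactly $tu$) then shows that $\widetilde{X}_{u} = \mathbb{R} u$ if and only if $E_x \subseteq \mathbb{R} x$. Thus the whole proposition reduces to proving that $u$ has the OUP if and only if $E_x \subseteq \mathbb{R} x$.

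For the easy implication, suppose $\widetilde{X}_{u} = \mathbb{R} u$. Since $\|u\| = 1$, any $w \in \widetilde{X}_{u}$ with $\|w\| = 1$ equals $\pm u \in [-u,u]$; unwinding the definition of the OUP (every $w$ with $\lambda u \pm w \ge 0$ must satisfy $\|w\| u \pm w \ge 0$, that is $\|w\|^{-1} w \in [-u,u]$) this is immediate. Equivalently one may invoke Theorem \ref{oup}: here $u \in S_{\widetilde{X}}$, and a direct check gives $(e - u) \perp_{\infty} u$, hence $(e-u) \perp_{\infty} [-u,u]$ once $[-u,u] \subseteq \mathbb{R} u$.

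The substance is the converse. Assume $u$ has the OUP and fix $y \in E_x \setminus \{0\}$; after replacing $y$ by $-y$ (which also lies in $E_x$) I may assume $\beta := \beta_y \ge 0$. Then $v := (\beta, y) \in [-u,u]$ with $s := \|v\| = \beta + \|y\|$, and Lemma \ref{oupl} gives $\|s(e-u) \pm v\| = s$. Expanding $s(e-u) \pm v = (\frac{s}{2} \pm \beta,\ -sx \pm y)$ and using $0 \le \beta \le \|y\|$ (so both scalar parts are nonnegative), this becomes the pair of equalities $\|sx - y\| = \frac{s}{2} - \beta$ and $\|sx + y\| = \frac{s}{2} + \beta$. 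Adding them yields $\|sx - y\| + \|sx + y\| = s = \|2sx\|$, so $\frac{y}{s} \in E_x$; the bound $\|z\| \le \|x\|$ valid on $E_x$ then forces $\|y\| \le \frac{s}{2}$, i.e.\ $\|y\| \le \beta$, whence $\beta = \|y\|$ and $s = 2\|y\|$. Substituting back, $\|sx - y\| = \frac{s}{2} - \beta = 0$, so $y = 2\|y\| x \in \mathbb{R} x$. As $y \in E_x$ was arbitrary, $E_x \subseteq \mathbb{R} x$ and therefore $\widetilde{X}_{u} = \mathbb{R} u$.

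I expect the main obstacle to be this converse, and within it the recognition that the $\oplus_1$ cone converts each ordering constraint into a triangle equality: this is what both pins $[-u,u]$ down to the equality set $E_x$ and, in the final step, collapses $\|sx - y\|$ to $0$. The one point demanding care is the chain $\frac{y}{s} \in E_x \Rightarrow \|y\| \le \beta \Rightarrow \beta = \|y\|$, which relies on combining the a priori estimate $|\beta_y| \le \|y\|$ with the norm bound $\|z\| \le \|x\|$ on $E_x$; arranging these two inequalities to pinch is the heart of the argument.
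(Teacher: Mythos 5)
Your proof is correct. It rests on the same underlying computation as the paper's --- the $\oplus_1$ norm converts each order constraint into a triangle equality, and a pinching argument then forces the second coordinate to be a scalar multiple of $x$ --- but it is organized around different tools. The paper normalizes an element $(\alpha,z)$ of $[-(\frac12,x),(\frac12,x)]$ to norm one, invokes Theorem \ref{oup} together with Proposition \ref{oinfty} to obtain $\Vert x-z\Vert=\frac12-\alpha$ and $\Vert x+z\Vert=\frac12+\alpha$, pinches $\vert\alpha\vert$ between $\le\frac12$ and $\ge\frac12$ to conclude $z=\pm x$, and then uses the OUP a second time to push an arbitrary element of $\widetilde{X}_{(\frac12,x)}$ into the order interval by normalization. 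You instead describe the order interval exactly as the graph of $y\mapsto\beta_y$ over the equality set $E_x$ --- which shows, incidentally, that those two triangle equalities hold for \emph{every} element of $[-u,u]$ with no hypothesis on $u$, so the paper's appeal to Theorem \ref{oup} at that point is doing less work than it appears to --- and you apply Lemma \ref{oupl} once, directly to the unnormalized element $(\beta_y,y)$, pinching $\beta_y$ against $\Vert y\Vert$. The net effect is the same, but your version isolates more cleanly where the OUP actually enters, and the intermediate reduction ``OUP $\iff E_x\subseteq\mathbb{R}x$'' is a reusable statement (it makes the link to Proposition \ref{strict} and strict convexity transparent). The delicate points --- $\frac{s}{2}-\beta\ge 0$ so the absolute value can be dropped, the harmless normalization $\beta_y\ge 0$, and $s>0$ --- are all handled.
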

\begin{proof}
	Let $(\frac 12, x)$ have the OUP in $\widetilde{X}$. First, we assume that $(\alpha, z) \in \widetilde{X}$ with $\vert \alpha \vert + \Vert z \Vert := \Vert (\alpha, z) \Vert = 1$ is such that $- (\frac 12, x) \le (\alpha, z) \le (\frac 12, z)$. Then $- \frac 12 \le \alpha \le \frac 12$, that is, $\vert \alpha \vert \le \frac 112$. Since $(\frac 12, x)$ has the OUP in $\widetilde{X}$, by Theorem \ref{oup}, we have $(e - (\frac 12, x)) \perp_{\infty} (\alpha, z)$. In other words, $(\frac 12, - x) \perp_{\infty} (\alpha, z)$. Thus by Proposition \ref{oinfty}, we get $\Vert (\frac 12, - x) \pm (\alpha, z) \Vert = 1$. Now it follows that 
	$$\left\vert \frac 12 + \alpha \right\vert + \Vert - x + z \Vert = 1 = \left\vert \frac 12 - \alpha \right\vert + \Vert - x - z \Vert.$$ 
	Since $\vert \alpha \vert \le \frac 12$, we obtain that $\Vert x - z \Vert = \frac 12 - \alpha$ and $\Vert x + z \Vert = \frac 12 + \alpha$. Thus 
	$$2 \Vert z \Vert = \Vert (x - z) - (x + z) \Vert \le \Vert x - z \Vert + \Vert x + z \Vert = 1.$$ 
	Hence $1 - \vert \alpha \vert = \Vert z \Vert \le \frac 12$, that is, $\vert \alpha \vert \ge \frac 12$. Since $\vert \alpha \vert \le \frac 12$, we get $\vert \alpha \vert = \frac 12$. 
	
	If $\alpha = \frac 12$, then $\Vert x - z \Vert = 0$ so that $z = x$ and consequently, $(\alpha, z) = (\frac 12, x)$. 
	
	If $\alpha = - \frac 12$, then $\Vert x + z \Vert = 0$ so that $z = - x$ and consequently, $(\alpha, z) = - (\frac 12, x)$. 
	
	Now, if $(\alpha, z) \in \widetilde{X}_{(\frac 12, x)}$ with $(\alpha, z) \ne (0, 0)$, then by the OUP of $(\frac 12, x)$ in $\widetilde{X}$, we get $\Vert (\alpha, z) \Vert^{-1} (\alpha, z) \in \left[ - (\frac 12, x), (\frac 12, x) \right]$. Thus as above $(\alpha, z) = \Vert (\alpha, z) \Vert (\frac 12, x)$ or $(\alpha, z) = - \Vert (\alpha, z) \Vert (\frac 12, x)$. In either case, $(\alpha, z) \in \mathbb{R} (\frac 12, x)$. Hence $\widetilde{X}_{(\frac 12, x)} = \mathbb{R} (\frac 12, x)$. Since $(\frac 12, - x) \perp_{\infty} (\frac 12, x)$, the converse is straight forward. 
\end{proof} 
\begin{remark}
	Note that $(\frac 12, x)$ is an order projection in $\widetilde{X}$ if and only if $(\frac 12, x)$ and $(\frac 12, - x)$ have the OUP in $\widetilde{X}$. Thus by Proposition \ref{aoup}, $(\frac 12, x)$ is an order projection in $\widetilde{X}$ if and only if $\tilde{X}_{(\frac 12, x)} = \mathbb{R} (\frac 12, x)$ and $\tilde{X}_{(\frac 12, - x)} = \mathbb{R} (\frac 12, - x)$.
\end{remark}
\begin{proposition}\label{strict}
	Let $X$ be a non-zero normed linear space. Then $(x, \frac 12)$ has the OUP in $\widetilde{X}$ for every $x \in X$ with $\Vert x \Vert = \frac 12$ if and only if $X$ is strictly convex.
\end{proposition}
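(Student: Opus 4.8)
The plan is to lean entirely on Proposition \ref{aoup}, which reduces the OUP of $(\frac12, x)$ to the identity $\widetilde{X}_{(\frac12,x)} = \mathbb{R}(\frac12,x)$. So the statement becomes: this identity holds for every $x$ with $\Vert x\Vert = \frac12$ if and only if $X$ is strictly convex. I would first unwind the membership condition: $(\alpha,z)\in\widetilde{X}_{(\frac12,x)}$ means there is $\lambda>0$ with $\Vert \lambda x + z\Vert \le \frac\lambda2 + \alpha$ and $\Vert \lambda x - z\Vert \le \frac\lambda2 - \alpha$. Adding these and using $\Vert x\Vert = \frac12$ together with the triangle inequality $\Vert \lambda x + z\Vert + \Vert \lambda x - z\Vert \ge \Vert 2\lambda x\Vert = \lambda$ forces equality throughout, i.e.\ $\Vert (\lambda x + z) + (\lambda x - z)\Vert = \Vert \lambda x + z\Vert + \Vert \lambda x - z\Vert$.

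For the forward direction (strict convexity $\Rightarrow$ OUP for all such $x$), this equality is exactly the configuration that strict convexity controls. After disposing of the degenerate cases $\lambda x + z = 0$ or $\lambda x - z = 0$ (which directly give $(\alpha, z) = \pm\lambda(\frac12, x)$), strict convexity yields that $\lambda x - z$ is a positive multiple of $\lambda x + z$; solving the resulting relation shows $z = s x$ for some scalar $s$ with $|s|\le\lambda$. Substituting $z = sx$ back into the two constraints, which are now equalities, and using $\lambda + s \ge 0$ pins down $s = 2\alpha$, whence $(\alpha, z) = 2\alpha(\frac12, x)\in\mathbb{R}(\frac12,x)$. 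By Proposition \ref{aoup}, $(\frac12,x)$ then has the OUP.

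For the converse I argue contrapositively: if $X$ fails to be strictly convex, choose unit vectors $a\neq b$ with $\Vert a + b\Vert = 2$, and set $x = \frac{a+b}{4}$, so that $\Vert x\Vert = \frac12$. With $\lambda = 1$ the element $(0, \frac{a-b}{4})$ satisfies both defining inequalities, because $x + \frac{a-b}{4} = \frac a2$ and $x - \frac{a-b}{4} = \frac b2$ both have norm $\frac12$; hence $(0,\frac{a-b}{4})$ lies in $\widetilde{X}_{(\frac12,x)}$. But it cannot lie in $\mathbb{R}(\frac12,x)$: matching first coordinates forces the scalar $0$, which would in turn force $a = b$, a contradiction. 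Thus $\widetilde{X}_{(\frac12,x)} \ne \mathbb{R}(\frac12,x)$, and by Proposition \ref{aoup}, $(\frac12,x)$ fails the OUP.

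The only genuinely delicate step is the scalar bookkeeping in the forward direction, namely extracting $z = sx$ from strict convexity and then upgrading $|s|\le\lambda$ to the exact value $s = 2\alpha$; everything else is routine triangle-inequality manipulation in $\widetilde{X} = \mathbb{R}\oplus_1 X$.
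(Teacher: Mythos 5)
Your proposal is correct and follows essentially the same route as the paper: both reduce via Proposition \ref{aoup} to showing $\widetilde{X}_{(\frac 12, x)} = \mathbb{R} (\frac 12, x)$, both force equality in the triangle inequality $\Vert x + z \Vert + \Vert x - z \Vert \ge \Vert 2x \Vert$ and invoke strict convexity to get $z = 2\alpha x$, and both refute the converse with an explicit non-scalar element of the order interval built from a failure of strict convexity (your witness $(0, \frac{a-b}{4})$ versus the paper's $(\frac 14, \frac 12 z)$, which differ only cosmetically).
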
 
\begin{proof}
	First, we assume that $X$ is strictly convex. Let $x \in X$ with $\Vert x \Vert = \frac 12$ and assume that $- (\frac 12, x) \le (\alpha, z) \le (\frac 12, x)$ for some $z \in X$. Then $(\frac 12 + \alpha, x + z), (\frac 12 - \alpha, x - z) \in \widetilde{X}^+$ so that 
	$$\Vert x + z \Vert \le \frac 12 + \alpha ~ \mbox{and} ~ \Vert x - z \Vert \le \frac 12 - \alpha.$$ 
	Now 
	\begin{eqnarray*}
		1 = 2 \Vert x \Vert &=& \Vert (x + z) + (x - z) \Vert \\ 
		&\le& \Vert x + z \Vert + \Vert x - z \Vert \\ 
		&\le& \frac 12 + \alpha + \frac 12 - \alpha = 1.
	\end{eqnarray*} 
	Thus 
	$$\Vert x + z \Vert = \frac 12 + \alpha ~ \mbox{and} ~ \Vert x - z \Vert = \frac 12 - \alpha \qquad (*) $$ 
	and we have 
	$$\Vert x + z \Vert + \Vert x - z \Vert = \frac 12 + \alpha + \frac 12 - \alpha = 1 = \Vert 2 x \Vert. \qquad (**)$$ 
	We show that $z = 2 \alpha x$. 	If $z = x$, then $\alpha = \frac 12$ and if $z = - x$, then $\alpha = - \frac 12$. In both the cases, we have $z = 2 \alpha x$. Thus we assume that $z \ne x$ and $z \ne - x$. Since $(x + z) + (x - z) = 2 x$, by $(**)$ using the strict convexity in $X$, we get that 
	$$\frac{x + z}{\Vert x + z\Vert} = \frac{x - z}{\Vert x - z\Vert}.$$ 
	If we  simplify using $(*)$, we again get $z = 2 \alpha x$. Thus in all case we have $(\alpha, z) = 2 \alpha (\frac 12, x)$. Hence $\langle (\frac 12, x) \rangle = \mathbb{R} (\frac 12, x)$. Now, by Corollary \ref{aoup}, we may conclude that $(\frac 12, x)$ has the OUP in $\widetilde{X}$ whenever $x \in X$ with $\Vert x \Vert = \frac 12$. 
	
	Next, we assume that $X$ is not strictly convex. Then we can find $y, z \in X$ with $y \ne z$ and $\Vert y \Vert = \frac 12 = \Vert z \Vert$ such that $\Vert y + z \Vert = 1$. Put $x = \frac 12 (y + z)$. Then $\Vert x \Vert = \frac 12$ and $(0, 0) \le (\frac 14, \frac 12 z) \le (\frac 12, x)$ but $(\frac 14, \frac 12 z) \notin \mathbb{R} (\frac 12, x)$. Thus $(\frac 12, x)$ does not possess the OUP in $\widetilde{X}$. 
\end{proof}
\begin{remark}
	In statement of Proposition \ref{strict}, we can replace the phrase ``$(x, \frac 12)$ has the OUP'' by the phrase ``$(x, \frac 12)$ is an order unit''.
\end{remark}

\end{document}